\newtheorem{theorem}{Theorem}
\newtheorem{corollary}[theorem]{Corollary}
\newtheorem{definition}[theorem]{Definition}
\newtheorem{remark}[theorem]{Remark}
\newenvironment{proof}[1][Proof]{\noindent\textbf{#1.} }{\ \rule{0.5em}{0.5em}}
\numberwithin{theorem}{section}
\numberwithin{equation}{section}
\begin{document}

\title{Geometrical Objects on the First Order Jet Space $J^{1}(T,\mathbb{R}%
^{5})$ Produced by the Lorenz Atmospheric DEs System}
\author{Mircea Neagu \\
%EndAName
{\small 22 July 2007; Revised 7 February 2010 }\\
{\small (minor corrections; an added \textbf{Acknowledgements})}}
\date{}
\maketitle

\begin{abstract}
The aim of this paper is to construct natural geometrical objects on the
1-jet space $J^{1}(T,\mathbb{R}^{5})$, where $T\subset \mathbb{R}$, like a
non-linear connection, a generalized Cartan connection, together with its
d-torsions and d-curvatures, a jet electromagnetic d-field and a jet
Yang-Mills energy, starting from the given Lorenz atmospheric DEs system and
the pair of Euclidian metrics $\Delta =(1,\delta _{ij})$ on $T\times \mathbb{%
R}^{5}$.
\end{abstract}

\textbf{Mathematics Subject Classification (2000):} 53C43, 53C07, 83C22.

\textbf{Key words and phrases:} 1-jet spaces, jet least squares Lagrangian
function, jet Riemann-Lagrange geometry, atmospheric Lorenz simplified model
of Rossby gravity wave interaction.

\section{Jet Riemann-Lagrange geometry produced by a first order non-linear
DEs system}

\hspace{4mm}Many authors, like Asanov [1], Saunders [8] and many others,
studied the contravariant differential geometry of 1-jet spaces. Going on
with the geometrical studies of Asanov and using as a pattern the Lagrangian
geometrical ideas developed by Miron and Anastasiei [4], the author of this
paper has developed the \textit{Riemann-Lagrange geometry of 1-jet spaces}
[5], which is very suitable for the geometrical study of the solutions of a
given DEs or PDEs system, via the \textit{least squares variational method}
proposed by Udri\c{s}te and Neagu in [7], [9].

In what follows we present the main jet Riemann-Lagrange geometrical results
that, in author opinion, characterize a given non-linear DEs system of order
one. In this direction, let $T=[a,b]\subset \mathbb{R}$ be a compact
interval of the set of real numbers and let us consider the jet fibre bundle
of order one%
\begin{equation*}
J^{1}(T,\mathbb{R}^{n})\rightarrow T\times \mathbb{R}^{n},\text{ }n\geq 2,
\end{equation*}%
whose local coordinates $(t,x^{i},x_{1}^{i}),$ $i=\overline{1,n},$ transform
by the rules%
\begin{equation*}
\widetilde{t}=\widetilde{t}(t),\text{ }\widetilde{x}^{i}=\widetilde{x}%
^{i}(x^{j}),\text{ }\widetilde{x}_{1}^{i}=\frac{\partial \widetilde{x}^{i}}{%
\partial x^{j}}\frac{dt}{d\widetilde{t}}\cdot x_{1}^{j}.
\end{equation*}

\begin{remark}
From a physical point of view, the coordinate $t$ has the physical meaning
of \textbf{relativistic time}, the coordinates $(x^{i})_{i=\overline{1,n}}$
represent \textbf{spatial coordinates} and the coordinates $(x_{1}^{i})_{i=%
\overline{1,n}}$ have the physical meaning of \textbf{relativistic velocities%
}.
\end{remark}

Let us consider that $X=\left( X_{(1)}^{(i)}(x^{k})\right) $ is an arbitrary
d-tensor field on the 1-jet space $J^{1}(T,\mathbb{R}^{n})$, whose local
components transform by the rules%
\begin{equation*}
\widetilde{X}_{(1)}^{(i)}=\frac{\partial \widetilde{x}^{i}}{\partial x^{j}}%
\frac{dt}{d\widetilde{t}}\cdot X_{(1)}^{(j)}.
\end{equation*}

It is obvious that the d-tensor field $X$ produces the jet first order DEs
system (\textit{jet dynamical system})%
\begin{equation}
x_{1}^{i}=X_{(1)}^{(i)}(x^{k}(t)),\text{ }\forall \text{ }i=\overline{1,n},
\label{DEs1}
\end{equation}%
where $c(t)=(x^{i}(t))$ is an unknown curve on $\mathbb{R}^{n}$ (i. e., a
jet field line of the d-tensor field $X$) and we used the notation%
\begin{equation*}
x_{1}^{i}\overset{not}{=}\dot{x}^{i}=\frac{dx^{i}}{dt},\text{ }\forall \text{
}i=\overline{1,n}.
\end{equation*}

Automatically, the jet first order DEs system (\ref{DEs1}), together with
the pair of Euclidian metrics $\Delta =(1,\delta _{ij})$ on $T\times \mathbb{%
R}^{n}$, produces the \textit{jet least squares Lagrangian function} 
\begin{equation*}
JLS_{\Delta }^{\text{DEs}}:J^{1}(T,\mathbb{R}^{n})\rightarrow \mathbb{R}_{+},
\end{equation*}%
expressed by%
\begin{eqnarray}
JLS_{\Delta }^{\text{DEs}}(x^{k},x_{1}^{k}) &=&\sum_{i,j=1}^{n}\delta _{ij}%
\left[ x_{1}^{i}-X_{(1)}^{(i)}(x)\right] \left[ x_{1}^{j}-X_{(1)}^{(j)}(x)%
\right] =  \notag \\
&=&\sum_{i=1}^{n}\left[ x_{1}^{i}-X_{(1)}^{(i)}(x)\right] ^{2},
\label{JetLS}
\end{eqnarray}%
where $x=(x^{k})_{k=\overline{1,n}}.$ Because the \textit{global minimum
points} of the \textit{jet least squares energy action}%
\begin{equation*}
\mathbb{E}_{\Delta }^{\text{DEs}}(c(t))=\int_{a}^{b}JLS_{\Delta }^{\text{DEs}%
}(x^{k}(t),\dot{x}^{k}(t))dt
\end{equation*}%
are exactly the solutions of class $C^{2}$ of the jet first order DEs system
(\ref{DEs1}), it follows that we may regard the jet least squares Lagrangian
function $JLS_{\Delta }^{\text{DEs}}$ as a natural geometrical substitut for
the DEs system of order one (\ref{DEs1}), on the 1-jet space $J^{1}(T,%
\mathbb{R}^{n})$.

\begin{remark}
It is important to note that any solution of class $C^{2}$ of the jet first
order DEs system (\ref{DEs1}) verify the second order Euler-Lagrange
equations produced by the jet least squares Lagrangian function $JLS_{\Delta
}^{\text{DEs}}$ (\textit{\ }\textbf{jet geometric dynamics})%
\begin{equation}
\frac{\partial \left[ JLS_{\Delta }^{\text{DEs}}\right] }{\partial x^{i}}-%
\frac{d}{dt}\left( \frac{\partial \left[ JLS_{\Delta }^{\text{DEs}}\right] }{%
\partial \dot{x}^{i}}\right) =0,\text{ }\forall \text{ }i=\overline{1,n}.
\label{E-L}
\end{equation}

Conversely, this statement is not true because there exist solutions for the
Euler-Lagrange DEs system (\ref{E-L}) which are not global minimum points
for the jet least squares energy action $\mathbb{E}_{\Delta }^{\text{DEs}}$,
that is which are not solutions for the jet first order DEs system (\ref%
{DEs1}).
\end{remark}

But, a Riemann-Lagrange geometry on $J^{1}(T,\mathbb{R}^{n})$ produced by
the jet least squares Lagrangian function $JLS_{\Delta }^{\text{DEs}}$, via
its Euler-Lagrange equations (\ref{E-L}), geometry in the sense of
non-linear connection, generalized Cartan connection, d-torsions,
d-curvatures, jet electromagnetic field and jet Yang-Mills energy, is now
completely done in the papers [5], [6] and [7]. For that reason, we
introduce the following concept:

\begin{definition}
Any geometrical object on $J^{1}(T,\mathbb{R}^{n})$, which is produced by
the jet least squares Lagrangian function $JLS_{\Delta }^{\text{DEs}}$, via
its second order Euler-Lagrange equations (\ref{E-L}), is called \textbf{%
geometrical object produced by the jet first order DEs system (\ref{DEs1})}.
\end{definition}

In a such context, we give the following geometrical result, which is proved
in [6] and, for the multi-time general case, in [7]. For all details, the
reader is invited to consult the book [5].

\begin{theorem}
\label{MainTh}(i) The \textbf{canonical non-linear connection on }$J^{1}(T,%
\mathbb{R}^{n})$\textbf{\ produced by the jet first order DEs system (\ref%
{DEs1})} has the local components%
\begin{equation*}
\Gamma ^{\text{DEs}}=\left( M_{(1)1}^{(i)},N_{(1)j}^{(i)}\right) ,
\end{equation*}%
where%
\begin{equation*}
M_{(1)1}^{(i)}=0\text{ and }N_{(1)j}^{(i)}=-\frac{1}{2}\left[ \frac{\partial
X_{(1)}^{(i)}}{\partial x^{j}}-\frac{\partial X_{(1)}^{(j)}}{\partial x^{i}}%
\right] ,\text{ }\forall \text{ }i,j=\overline{1,n}.
\end{equation*}

(ii) All adapted components of the \textbf{canonical generalized Cartan
connection }$C\Gamma ^{\text{DEs}}$\textbf{\ produced by the jet first order
DEs system (\ref{DEs1})} vanish.

(iii) The effective adapted components $R_{(1)jk}^{(i)}$ of the \textbf{%
torsion} d-tensor \textbf{T}$^{\text{DEs}}$ of the canonical generalized
Cartan connection $C\Gamma ^{\text{DEs}}$ \textbf{produced by the jet first
order DEs system (\ref{DEs1})} are%
\begin{equation*}
R_{(1)jk}^{(i)}=-\frac{1}{2}\left[ \frac{\partial ^{2}X_{(1)}^{(i)}}{%
\partial x^{k}\partial x^{j}}-\frac{\partial ^{2}X_{(1)}^{(j)}}{\partial
x^{k}\partial x^{i}}\right] ,\text{ }\forall \text{ }i,j,k=\overline{1,n}.
\end{equation*}

(iv) All adapted components of the \textbf{curvature} d-tensor \textbf{R}$^{%
\text{DEs}}$ of the canonical generalized Cartan connection $C\Gamma ^{\text{%
DEs}}$ \textbf{produced by the jet first order DEs system (\ref{DEs1})}
vanish.

(v) The \textbf{geometric electromagnetic distinguished 2-form produced by
the jet first order DEs system (\ref{DEs1})} has the expression%
\begin{equation*}
F^{\text{DEs}}=F_{(i)j}^{(1)}\delta x_{1}^{i}\wedge dx^{j},
\end{equation*}%
where%
\begin{equation*}
\delta x_{1}^{i}=dx_{1}^{i}+N_{(1)k}^{(i)}dx^{k},\text{ }\forall \text{ }i=%
\overline{1,n},
\end{equation*}%
and%
\begin{equation*}
F_{(i)j}^{(1)}=\frac{1}{2}\left[ \frac{\partial X_{(1)}^{(i)}}{\partial x^{j}%
}-\frac{\partial X_{(1)}^{(j)}}{\partial x^{i}}\right] ,\text{ }\forall 
\text{ }i,j=\overline{1,n}.
\end{equation*}

(vi) The adapted components $F_{(i)j}^{(1)}$ of the electromagnetic d-form $%
F^{\text{DEs}}$ produced by the jet first order DEs system (\ref{DEs1})%
\textbf{\ }verify the \textbf{generalized Maxwell equations}%
\begin{equation*}
\sum_{\{i,j,k\}}F_{(i)j||k}^{(1)}=0,
\end{equation*}%
where $\sum_{\{i,j,k\}}$ represents a cyclic sum and%
\begin{equation*}
F_{(i)j||k}^{(1)}=\frac{\partial F_{(i)j}^{(1)}}{\partial x^{k}}
\end{equation*}%
has the geometrical meaning of the horizontal local covariant derivative
produced by the Berwald linear connection $B\Gamma _{0}$ on $J^{1}(T,\mathbb{%
R}^{n}).$ For more details, please consult [5].

(vii) The \textbf{geometric jet Yang-Mills energy produced by the jet first
order DEs system (\ref{DEs1})} is defined by the formula%
\begin{equation*}
EYM^{\text{DEs}}(x)=\sum_{i=1}^{n-1}\sum_{j=i+1}^{n}\left[ F_{(i)j}^{(1)}%
\right] ^{2}.
\end{equation*}
\end{theorem}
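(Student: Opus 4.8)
The plan is to derive everything from the jet least squares Lagrangian $JLS_\Delta^{\text{DEs}}$ via its Euler-Lagrange equations, exactly as the Definition prescribes, following the general Riemann-Lagrange machinery of [5]. First I would expand the Lagrangian explicitly,
$$JLS_\Delta^{\text{DEs}}=\sum_{i=1}^n\left[x_1^i-X_{(1)}^{(i)}(x)\right]^2,$$
and compute the relevant partial derivatives. Since there is no explicit $t$-dependence and the "temporal" metric is the constant $1$, I expect the fundamental metrical d-tensor $g_{ij}=\tfrac12\,\partial^2 JLS_\Delta^{\text{DEs}}/\partial x_1^i\partial x_1^j$ to reduce to $\delta_{ij}$, which immediately forces the $x_1$-Christoffel symbols to vanish and makes all metric-derived connection coefficients trivial. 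That constancy of $g_{ij}$ is the structural fact underlying parts (ii) and (iv).

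**Next** I would treat part (i). The canonical nonlinear connection associated with a Lagrangian arises from the semispray (the Euler-Lagrange vector field). Writing the Euler-Lagrange equations (\ref{E-L}) out, the term $\partial JLS_\Delta^{\text{DEs}}/\partial x^i$ produces $-2\sum_k\left[x_1^k-X_{(1)}^{(k)}\right]\partial X_{(1)}^{(k)}/\partial x^i$, and $\tfrac{d}{dt}\left(\partial JLS_\Delta^{\text{DEs}}/\partial \dot x^i\right)$ produces $2\ddot x^i-2\sum_k(\partial X_{(1)}^{(i)}/\partial x^k)\dot x^k$. Collecting these into the standard form $\ddot x^i+2G^i(x,\dot x)=0$ identifies the spray coefficients $G^i$; the nonlinear connection components are then $N_{(1)j}^{(i)}=\partial G^i/\partial x_1^j$. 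After the $\delta_{ij}$-contraction the symmetric parts of the $X$-derivatives cancel and only the antisymmetric combination survives, yielding $N_{(1)j}^{(i)}=-\tfrac12\left[\partial X_{(1)}^{(i)}/\partial x^j-\partial X_{(1)}^{(j)}/\partial x^i\right]$ and $M_{(1)1}^{(i)}=0$ (no $t$-dependence).

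**Then** parts (iii)-(vii) follow by plugging $N_{(1)j}^{(i)}$ into the already-established general formulas of [5], [6]. For (iii), the effective torsion coefficient in the Riemann-Lagrange setting is $R_{(1)jk}^{(i)}=\partial N_{(1)j}^{(i)}/\partial x^k-\partial N_{(1)k}^{(i)}/\partial x^j$ up to the usual convention; since $N_{(1)j}^{(i)}$ is linear in first derivatives of $X$, one $x^k$-differentiation gives the stated second-derivative expression, and I would check that the antisymmetry in $j,k$ matches. For (iv), because the generalized Cartan connection components all vanish by (ii) and $N_{(1)j}^{(i)}$ depends only on $x$ (not on $x_1$), the curvature d-tensor built from them vanishes identically. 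For (v) I identify $F_{(i)j}^{(1)}$ as $-N_{(1)j}^{(i)}$ (equivalently $\tfrac12$ of the antisymmetrized $X$-derivatives), which is exactly the electromagnetic d-tensor attached to a nonlinear connection in this theory; (vi) is the Bianchi-type identity $\sum_{\{i,j,k\}}\partial F_{(i)j}^{(1)}/\partial x^k=0$, which holds because cyclically summing third mixed partials of the $X_{(1)}^{(i)}$ cancels by symmetry of partial derivatives; and (vii) is merely the definition of the Yang-Mills energy as the squared norm of $F_{(i)j}^{(1)}$ over the independent index pairs $i<j$.

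**The main obstacle** I anticipate is not any single hard computation but the bookkeeping: correctly matching the sign and normalization conventions of [5] for the semispray, the nonlinear connection, the Cartan connection's adapted coefficients, and the Berwald derivative, so that the antisymmetrization and the factors of $\tfrac12$ come out exactly as stated. In particular, verifying part (ii) rigorously requires confirming that every adapted coefficient of $C\Gamma^{\text{DEs}}$ — not only the $g_{ij}$-Christoffel terms but also the mixed horizontal-vertical ones — vanishes; this rests on $g_{ij}=\delta_{ij}$ being constant and on $N_{(1)j}^{(i)}$ being independent of $x_1^k$, so I would make those two facts the explicit pivot of the argument and let the remaining parts cascade from them.
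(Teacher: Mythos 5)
Your overall strategy --- reconstructing the theorem from the least squares Lagrangian via its Euler--Lagrange semispray --- is legitimate, and in fact it is the only route available here, since the paper does not prove Theorem \ref{MainTh} at all: it imports it from [6], from [7] for the multi-time case, and from the book [5]. Checked against the stated formulas and the matriceal relations of Remark \ref{F}, parts (i), (ii), (iv), (v) and (vii) of your plan are correct and match the intended machinery: your semispray computation does yield $N_{(1)j}^{(i)}=-\frac{1}{2}\bigl[\partial X_{(1)}^{(i)}/\partial x^{j}-\partial X_{(1)}^{(j)}/\partial x^{i}\bigr]$ with $M_{(1)1}^{(i)}=0$, the constancy of $g_{ij}=\delta_{ij}$ (together with $h_{11}=1$) does annihilate every adapted Cartan coefficient and hence every curvature d-tensor, and $F^{(1)}=-N_{(1)}$ with the Yang--Mills energy summed over pairs $i<j$ is exactly items 3 and 4 of Remark \ref{F}.

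The genuine gap is in part (iii). You posit the torsion as the antisymmetrized curvature of the nonlinear connection, $R_{(1)jk}^{(i)}=\partial N_{(1)j}^{(i)}/\partial x^{k}-\partial N_{(1)k}^{(i)}/\partial x^{j}$ ``up to the usual convention,'' and propose to verify antisymmetry in $(j,k)$. That check would refute, not confirm, your formula: the theorem's expression $R_{(1)jk}^{(i)}=-\frac{1}{2}\bigl[\partial^{2}X_{(1)}^{(i)}/\partial x^{k}\partial x^{j}-\partial^{2}X_{(1)}^{(j)}/\partial x^{k}\partial x^{i}\bigr]$ is skew in $(i,j)$, not in $(j,k)$, and is exactly the \emph{single} derivative $\partial N_{(1)j}^{(i)}/\partial x^{k}$, i.e. item 2 of Remark \ref{F}, $R_{(1)k}=\partial N_{(1)}/\partial x^{k}$ --- the convention the paper then applies to prove its Corollary. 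Your antisymmetrized expression instead collapses, after the cancellations you describe, to $\frac{1}{2}\,\partial\bigl(\partial X_{(1)}^{(j)}/\partial x^{k}-\partial X_{(1)}^{(k)}/\partial x^{j}\bigr)/\partial x^{i}$, a genuinely different d-tensor; concretely, in the Lorenz application one gets $\partial \hat{N}_{(1)3}^{(2)}/\partial x^{1}-\partial \hat{N}_{(1)1}^{(2)}/\partial x^{3}=-1-(-1)=0$, whereas the paper lists $\hat{R}_{(1)31}^{(2)}=-1=\partial \hat{N}_{(1)3}^{(2)}/\partial x^{1}$. So to land on (iii) you must adopt the torsion convention of [5], [6] (plain $x^{k}$-derivatives of the nonlinear connection components), not the Miron--Anastasiei-style antisymmetrization. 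One small slip in (vi) as well: $F_{(i)j||k}^{(1)}$ involves second-order, not third-order, partials of the $X_{(1)}^{(i)}$; your cyclic-sum cancellation by symmetry of mixed partial derivatives is nevertheless the right mechanism.
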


\begin{remark}
\label{F} If we use the following matriceal notations

\begin{itemize}
\item $J\left( X_{(1)}\right) =\left( \dfrac{\partial X_{(1)}^{(i)}}{%
\partial x^{j}}\right) _{i,j=\overline{1,n}}$ - the \textbf{Jacobian matrix},

\item $N_{(1)}=\left( N_{(1)j}^{(i)}\right) _{i,j=\overline{1,n}}$ - the 
\textbf{non-linear connection matrix},

\item $R_{(1)k}=\left( R_{(1)jk}^{(i)}\right) _{i,j=\overline{1,n}},$ $%
\forall $ $k=\overline{1,n},$ - the \textbf{torsion matrices},

\item $F^{(1)}=\left( F_{(i)j}^{(1)}\right) _{i,j=\overline{1,n}}$ - the 
\textbf{electromagnetic matrix},
\end{itemize}

then the following matriceal geometrical relations attached to the jet first
order DEs system (\ref{DEs1}) hold good:

\begin{enumerate}
\item $N_{(1)}=-\dfrac{1}{2}\left[ J\left( X_{(1)}\right) -\text{ }%
^{T}J\left( X_{(1)}\right) \right] ,$

\item $R_{(1)k}=\dfrac{\partial }{\partial x^{k}}\left[ N_{(1)}\right] ,$ $%
\forall $ $k=\overline{1,n},$

\item $F^{(1)}=-N_{(1)},$

\item $EYM^{\text{DEs}}(x)=\dfrac{1}{2}\cdot Trace\left[ F^{(1)}\cdot \text{ 
}^{T}F^{(1)}\right] ,$ that is the jet Yang-Mills energy coincides with the
norm of the skew-symmetric electromagnetic matrix $F^{(1)}$ in the Lie
algebra $o(n)=L(O(n)).$
\end{enumerate}
\end{remark}

In the sequel, we apply the above jet contravariant Riemann-Lagrange
geometrical results to the Lorenz five-components atmospheric DEs system
introduced by Lorenz [3] and studied, via the Melnikov function method for
Hamiltonian systems on Lie groups, by Birtea, Puta, Ra\c{t}iu and Tudoran
[2].

\section{Jet Riemann-Lagrange geometry produced by the Lorenz simplified
model of Rossby gravity wave interaction}

\hspace{4mm} The first model equations for the atmosphere are that so called 
\textit{primitive equations}. It seems that this model produces wave-like
motions on different time scales:

\begin{itemize}
\item on the one hand, this model produces the slow motions which have a
period of order of days (these slow-waves are called \textit{Rossby waves});

\item on the other hand, this model produces fast motions which have a
period of hours (these fast-waves are called \textit{gravity waves}).
\end{itemize}

The question of how to balance these two time scales leads Lorenz [3] to
consider a simplified version of the primitive equations model, which is
given by the following non-linear system of five differential equations [2]:%
\begin{equation}
\left\{ 
\begin{array}{l}
\dfrac{dx^{1}}{dt}=-x^{2}x^{3}+\varepsilon x^{2}x^{5}\medskip \\ 
\dfrac{dx^{2}}{dt}=x^{1}x^{3}-\varepsilon x^{1}x^{5}\medskip \\ 
\dfrac{dx^{3}}{dt}=-x^{1}x^{2}\medskip \\ 
\dfrac{dx^{4}}{dt}=-x^{5}\medskip \\ 
\dfrac{dx^{5}}{dt}=x^{4}+\varepsilon x^{1}x^{2},%
\end{array}%
\right.  \label{Lorenz}
\end{equation}%
where the variables $x^{4}$, $x^{5}$ represent the fast gravity wave
oscillations and the variables $x^{1}$, $x^{2}$, $x^{3}$ are the slow Rossby
wave oscillations, with a parameter $\varepsilon $ which is related to the
physical Rossby number.

\begin{remark}
It is obvious that, from a physical point of view, the Lorenz atmospheric
DEs system (\ref{Lorenz}) couples the Rossby waves with the gravity waves.
\end{remark}

Naturally, the Lorenz atmospheric DEs system (\ref{Lorenz}) can be regarded
as a non-linear DEs system of order one on the 1-jet space $J^{1}(T,\mathbb{R%
}^{5})$, which is produced by the d-tensor field $X=\left(
X_{(1)}^{(i)}(x)\right) $, where $i=\overline{1,5}$ and%
\begin{equation*}
x=(x^{1},x^{2},x^{3},x^{4},x^{5}),
\end{equation*}%
having the local components%
\begin{equation}
\begin{array}{l}
X_{(1)}^{(1)}(x)=-x^{2}x^{3}+\varepsilon x^{2}x^{5},\medskip \\ 
X_{(1)}^{(2)}(x)=x^{1}x^{3}-\varepsilon x^{1}x^{5},\medskip \\ 
X_{(1)}^{(3)}(x)=-x^{1}x^{2},\medskip \\ 
X_{(1)}^{(4)}(x)=-x^{5},\medskip \\ 
X_{(1)}^{(5)}(x)=x^{4}+\varepsilon x^{1}x^{2}.%
\end{array}
\label{XLorenz}
\end{equation}

Consequently, via the Theorem \ref{MainTh}, we assert that the
Riemann-Lagrange geometrical behavior on the 1-jet space $J^{1}(T,\mathbb{R}%
^{5})$ of the Lorenz atmospheric DEs system (\ref{Lorenz}) is described in
the following

\begin{corollary}
(i) The \textbf{canonical non-linear connection on }$J^{1}(T,\mathbb{R}^{5})$%
\textbf{\ produced by the Lorenz atmospheric DEs system (\ref{Lorenz})} has
the local components%
\begin{equation*}
\hat{\Gamma}=\left( 0,\hat{N}_{(1)j}^{(i)}\right) ,
\end{equation*}%
where $\hat{N}_{(1)j}^{(i)}$ are the entries of the matrix%
\begin{equation*}
\hat{N}_{(1)}=\left( \hat{N}_{(1)j}^{(i)}\right) _{i,j=\overline{1,5}%
}=\left( 
\begin{array}{ccccc}
0 & x^{3}-\varepsilon x^{5} & 0 & 0 & 0\medskip \\ 
-x^{3}+\varepsilon x^{5} & 0 & -x^{1} & 0 & \varepsilon x^{1}\medskip \\ 
0 & x^{1} & 0 & 0 & 0\medskip \\ 
0 & 0 & 0 & 0 & 1\medskip \\ 
0 & -\varepsilon x^{1} & 0 & -1 & 0%
\end{array}%
\right) .
\end{equation*}

(ii) All adapted components of the \textbf{canonical generalized Cartan
connection }$C\hat{\Gamma}$\textbf{\ produced by the Lorenz atmospheric DEs
system (\ref{Lorenz})} vanish.

(iii) All adapted components of the \textbf{torsion} d-tensor \textbf{\^{T}}
of the canonical generalized Cartan connection $C\hat{\Gamma}$ \textbf{%
produced by the Lorenz atmospheric DEs system (\ref{Lorenz})} are zero,
except%
\begin{equation*}
\begin{array}{ll}
\hat{R}_{(1)21}^{(3)}=-\hat{R}_{(1)31}^{(2)}=1, & \hat{R}_{(1)21}^{(5)}=-%
\hat{R}_{(1)51}^{(2)}=-\varepsilon ,\medskip \\ 
\hat{R}_{(1)13}^{(2)}=-\hat{R}_{(1)23}^{(1)}=-1, & \hat{R}_{(1)15}^{(2)}=-%
\hat{R}_{(1)25}^{(1)}=\varepsilon .%
\end{array}%
\end{equation*}

(iv) All adapted components of the \textbf{curvature} d-tensor \textbf{\^{R}}
of the canonical generalized Cartan connection $C\hat{\Gamma}$ \textbf{%
produced by the Lorenz atmospheric DEs system (\ref{Lorenz})} vanish.

(v) The \textbf{geometric electromagnetic distinguished 2-form produced by
the Lorenz atmospheric DEs system (\ref{Lorenz})} has the expression%
\begin{equation*}
\hat{F}=\hat{F}_{(i)j}^{(1)}\delta x_{1}^{i}\wedge dx^{j},
\end{equation*}%
where%
\begin{equation*}
\delta x_{1}^{i}=dx_{1}^{i}+\hat{N}_{(1)k}^{(i)}dx^{k},\text{ }\forall \text{
}i=\overline{1,5},
\end{equation*}%
and the adapted components $\hat{F}_{(i)j}^{(1)}$ are the entries of the
matrix%
\begin{equation*}
\hat{F}^{(1)}=\left( \hat{F}_{(i)j}^{(1)}\right) _{i,j=\overline{1,5}}=-\hat{%
N}_{(1)}.
\end{equation*}

(vi) The \textbf{jet geometric Yang-Mills energy produced by the Lorenz
atmospheric DEs system (\ref{Lorenz})} is given by the formula%
\begin{equation*}
EYM^{\text{Lorenz}}(x)=\left( \varepsilon x^{5}-x^{3}\right) ^{2}+\left(
x^{1}\right) ^{2}+\left( \varepsilon x^{1}\right) ^{2}+1.
\end{equation*}
\end{corollary}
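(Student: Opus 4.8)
The plan is to observe that the Corollary is nothing more than the specialization of Theorem \ref{MainTh} to the particular d-tensor field $X=\left(X_{(1)}^{(i)}\right)$ whose components are listed in (\ref{XLorenz}). Thus every item (i)--(vi) follows by substituting these explicit components into the corresponding formula of the Main Theorem, or equivalently into the matriceal relations collected in Remark \ref{F}. No new geometric input is needed; the entire argument reduces to differentiation and index bookkeeping.

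First I would compute the Jacobian matrix $J(X_{(1)})=\left(\partial X_{(1)}^{(i)}/\partial x^{j}\right)$ of the field (\ref{XLorenz}). Since each component $X_{(1)}^{(i)}$ is at most quadratic in the variables $x^{k}$, this is a routine set of first derivatives. Feeding $J(X_{(1)})$ into relation 1 of Remark \ref{F}, namely $N_{(1)}=-\tfrac{1}{2}\left[J(X_{(1)})-{}^{T}J(X_{(1)})\right]$, produces the non-linear connection matrix $\hat{N}_{(1)}$ of part (i). Because $X$ depends only on the spatial coordinates $x^{k}$ (and not on $t$ or $x_{1}^{k}$), parts (ii) and (iv) are immediate: the Main Theorem already asserts that all adapted components of the canonical generalized Cartan connection and of its curvature d-tensor vanish, so nothing further must be checked.

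Next I would obtain the torsion of part (iii) from relation 2 of Remark \ref{F}, $R_{(1)k}=\partial[\hat{N}_{(1)}]/\partial x^{k}$. Since every entry of $\hat{N}_{(1)}$ is affine in the $x^{k}$, each derivative is a constant matrix, and only the derivatives with respect to $x^{1}$, $x^{3}$ and $x^{5}$ survive; reading off the nonzero entries yields exactly the listed components $\hat{R}_{(1)jk}^{(i)}$ together with their skew-symmetry in the pair $(i,j)$. The electromagnetic matrix of part (v) then comes for free from relation 3, $\hat{F}^{(1)}=-\hat{N}_{(1)}$. Finally, for part (vi) I would either sum the squares of the strict upper-triangular entries of $\hat{F}^{(1)}$, following the energy formula in Theorem \ref{MainTh}(vii), or use relation 4, $EYM=\tfrac{1}{2}\,\mathrm{Trace}[\hat{F}^{(1)}\cdot{}^{T}\hat{F}^{(1)}]$; either route gives $(\varepsilon x^{5}-x^{3})^{2}+(x^{1})^{2}+(\varepsilon x^{1})^{2}+1$.

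The only real obstacle is clerical rather than conceptual: one must keep the index conventions straight when passing between the tensor components $N_{(1)j}^{(i)}$, $R_{(1)jk}^{(i)}$, $F_{(i)j}^{(1)}$ and their matrix representatives, and one must respect the skew-symmetrization that already halves the off-diagonal entries, so that no spurious factor of $2$ creeps into the torsion or into the Yang-Mills energy. With that care, the verification is entirely mechanical.
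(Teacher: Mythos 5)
Your proposal is correct and follows essentially the same route as the paper's own proof, which likewise reduces the Corollary to Theorem \ref{MainTh} and the matriceal relations of Remark \ref{F}, displays the Jacobian matrix $J\left(X_{(1)}\right)$ of the components (\ref{XLorenz}), and reads off items (i)--(vi) mechanically. Your computations (the skew-symmetrization $\hat{N}_{(1)}=-\tfrac{1}{2}\left[J\left(X_{(1)}\right)-\ ^{T}J\left(X_{(1)}\right)\right]$, the derivatives $\partial \hat{N}_{(1)}/\partial x^{k}$ surviving only for $k\in\{1,3,5\}$, the identification $\hat{F}^{(1)}=-\hat{N}_{(1)}$, and the upper-triangular sum of squares) all check out against the stated entries.
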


\begin{proof}
The Lorenz atmospheric DEs system (\ref{Lorenz}) is a particular case of the
jet first order DEs system (\ref{DEs1}) for $n=5$ and $X=\left(
X_{(1)}^{(i)}(x)\right) _{i=\overline{1,5}}$ given by the relations (\ref%
{XLorenz}). In conclusion, applying the Theorem \ref{MainTh}, together with
the Remark \ref{F}, and using the Jacobian matrix%
\begin{equation*}
J\left( X_{(1)}\right) =\left( 
\begin{array}{ccccc}
0 & -x^{3}+\varepsilon x^{5} & -x^{2} & 0 & \varepsilon x^{2}\medskip \\ 
x^{3}-\varepsilon x^{5} & 0 & x^{1} & 0 & -\varepsilon x^{1}\medskip \\ 
-x^{2} & -x^{1} & 0 & 0 & 0\medskip \\ 
0 & 0 & 0 & 0 & -1\medskip \\ 
\varepsilon x^{2} & \varepsilon x^{1} & 0 & 1 & 0%
\end{array}%
\right) ,
\end{equation*}%
we obtain what we were looking for.
\end{proof}

\begin{remark}
Let us remark that, although the jet Yang-Mills electromagnetic energy $EYM^{%
\text{Lorenz}}$ produced by the Lorenz atmospheric DEs system (\ref{Lorenz})
depends only by the coordinates $x^{1}$, $x^{3}$ and $x^{5}$, it still
couples the slow Rossby wave oscillations with the fast gravity wave
oscillations. However, the coordinates $x^{2}$ and $x^{4}$ are missing in
the expression of $EYM^{\text{Lorenz}}$. There exists a physical
interpretation of this fact?
\end{remark}

\section{Yang-Mills energetical hypersurfaces of con\-stant level produced
by the Lorenz at\-mos\-phe\-ric DEs system}

\hspace{4mm} In the preceding Riemann-Lagrange geometrical theory on the
1-jet space $J^{1}(T,\mathbb{R}^{5})$ the Lorenz atmospheric DEs system (\ref%
{Lorenz}) \textit{"produces"} a jet Yang-Mills energy given by the formula%
\begin{equation*}
EYM^{\text{Lorenz}}(x)=\left( 1+\varepsilon ^{2}\right) \left( x^{1}\right)
^{2}+\left( x^{3}\right) ^{2}+\varepsilon ^{2}(x^{5})^{2}-2\varepsilon
x^{3}x^{5}+1,
\end{equation*}%
where $x=(x^{1},x^{2},x^{3},x^{4},x^{5}).$ In what follows, let us study the 
\textit{jet Yang-Mills energetical hypersurfaces of constant level} produced
by the Lorenz at\-mos\-phe\-ric DEs system (\ref{Lorenz}), which are defined
by the implicit equations%
\begin{equation*}
\Sigma _{C}^{\text{Lorenz}}:\left( \varepsilon x^{5}-x^{3}\right)
^{2}+\left( 1+\varepsilon ^{2}\right) \left( x^{1}\right) ^{2}=C-1,
\end{equation*}%
where $C$ is a constant real number.

Because $\Sigma _{C}^{\text{Lorenz}}$ is a \textit{quadric} in the system of
axes $Ox^{1}x^{3}x^{5}$ for every $C\in \mathbb{R}$, then, using the
reduction to the canonical form of a quadric, we find the following
geometrical results:

\begin{enumerate}
\item If $C<1$, then we have $\Sigma _{C<1}^{\text{Lorenz}}=\emptyset $;

\item If $C=1$, then we have%
\begin{equation*}
\Sigma _{C=1}^{\text{Lorenz}}:\left\{ 
\begin{array}{l}
x^{1}=0\medskip \\ 
x^{3}-\varepsilon x^{5}=0,%
\end{array}%
\right.
\end{equation*}%
that is $\Sigma _{C=1}^{\text{Lorenz}}$ is a \textit{straight line} in the
system of axes $Ox^{1}x^{3}x^{5}$;

\item If $C>1$, then we have%
\begin{equation*}
\Sigma _{C>1}^{\text{Lorenz}}:\left( 1+\varepsilon ^{2}\right) \left(
x^{1}\right) ^{2}+\left( x^{3}\right) ^{2}+\varepsilon
^{2}(x^{5})^{2}-2\varepsilon x^{3}x^{5}-C+1=0,
\end{equation*}%
that is $\Sigma _{C>1}^{\text{Lorenz}}$ is a degenerate non-empty quadric in
the system of axes $Ox^{1}x^{3}x^{5}$, whose canonical form is%
\begin{equation*}
\Sigma _{C>1}^{\text{Lorenz}}:\left( X^{3}\right) ^{2}+\left( X^{5}\right)
^{2}=\frac{C-1}{1+\varepsilon ^{2}},
\end{equation*}%
where the rotation of the system of axes $Ox^{1}x^{3}x^{5}$ into the system
of axes $OX^{1}X^{3}X^{5}$ is given by the matriceal relation%
\begin{equation*}
\left( 
\begin{array}{c}
x^{1} \\ 
x^{3} \\ 
x^{5}%
\end{array}%
\right) =\frac{1}{\sqrt{1+\varepsilon ^{2}}}\left( 
\begin{array}{ccc}
0 & \sqrt{1+\varepsilon ^{2}} & 0 \\ 
\varepsilon & 0 & 1 \\ 
1 & 0 & -\varepsilon%
\end{array}%
\right) \left( 
\begin{array}{c}
X^{1} \\ 
X^{3} \\ 
X^{5}%
\end{array}%
\right) .
\end{equation*}

\hspace{4mm} In conclusion, the degenerate non-empty quadric $\Sigma _{C>1}^{%
\text{Lorenz}}$ is in the system of axes $Ox^{1}x^{3}x^{5}$ a \textit{slant
circular cylinder} of radius%
\begin{equation*}
R=\sqrt{\frac{C-1}{1+\varepsilon ^{2}}},
\end{equation*}%
having as axis of symmetry the straight line $\Sigma _{C=1}^{\text{Lorenz}}$.
\end{enumerate}

\textbf{Open problem. }There exist real physical interpretations, in the
study of the Lorenz atmospheric DEs system (\ref{Lorenz}), for the preceding
geometrical results?

\textbf{Acknowledgements.} This work was supported by Research Contract with
Sinoptix No. 8441/2009. The author would like to express his sincere
gratitude to Professor Gh. Atanasiu for its suggestions and useful
discussions on this topic.

\textbf{Author's address:} Mircea N{\scriptsize EAGU}

University Transilvania of Brasov, Faculty of Mathematics and Informatics

Department of Algebra, Geometry and Differential Equations

B-dul Iuliu Maniu, No. 50, BV 500091, Brasov, Romania

\textbf{E-mails:} mircea.neagu@unitbv.ro, mirceaneagu73@yahoo.com

\textbf{Website:} http://www.2collab.com/user:mirceaneagu

\end{document}